  \tikzset{
%   >=stealth,
%     >=latex,
%   Nodo/.style={fill=white,draw=black,thick},
    bnode/.style={circle,fill=black,draw=black},
    middlearrow/.style={	%arrow in the middle of the line
	  decoration={markings, mark= at position 0.55 with {\arrow{#1}}},
	  postaction={decorate}
	  },
    double arrow/.style args={#1 colored by #2 and #3}{ %code for double arrows taken online
	  -stealth,line width=#1,#2, % first arrow
	  postaction={draw,-stealth,#3,line width=(#1)/3,
                shorten <=(#1)/3,shorten >=2*(#1)/3}, % second arrow
	  }
	  
  }
\tikzset{
        hatch distance/.store in=\hatchdistance,
        hatch distance=10pt,
        hatch thickness/.store in=\hatchthickness,
        hatch thickness=2pt
    }
\pgfqpoint{\hatchdistance}{\hatchdistance}}
\pgfqpoint{\hatchdistance}{-\hatchdistance}}
		\newcommand{\DD}{\mathbb{D}}
		\newcommand{\NN}{\mathbb{N}}	
		\newcommand{\RR}{\mathbb{R}}	
\renewcommand{\SS}{\mathbb{S}}
		\newcommand{\ZZ}{\mathbb{Z}}
\renewcommand{\leq}{\leqslant}				%I like this \leq better
\renewcommand{\geq}{\geqslant} 				%I like this \geq better
\newsavebox{\@brx}
\newcommand{\llangle}[1][]{\savebox{\@brx}{\(\m@th{#1\langle}\)}%
  \mathopen{\copy\@brx\mkern2mu\kern-0.8\wd\@brx\usebox{\@brx}}}
\newcommand{\rrangle}[1][]{\savebox{\@brx}{\(\m@th{#1\rangle}\)}%
  \mathclose{\copy\@brx\mkern2mu\kern-0.8\wd\@brx\usebox{\@brx}}}
\DeclarePairedDelimiter\abs{\lvert}{\rvert}		%absolute value
\DeclarePairedDelimiter\norm{\lVert}{\rVert}		%norm
\DeclarePairedDelimiter\paren{(}{)}			%   These are to fix the spacing between 
\DeclarePairedDelimiter\brackets{[}{]}			% 	operators and big parentesis.
\DeclarePairedDelimiter\braces{\{}{\}}			%   Also, use with * to get variable size
\let\oldabs\abs	
\def\abs{\@ifstar{\oldabs}{\oldabs*}}			%makes it default to rescale the size of \abs (resp \norm). Use \abs* (resp \norm*) if you don't want them to rescale
\let\oldnorm\norm
\def\norm{\@ifstar{\oldnorm}{\oldnorm*}}
\newcommand{\bigparen}[1]{\paren[\big]{#1}}
\newcommand{\bigbrack}[1]{\brackets[\big]{#1}}
\DeclareMathAlphabet{\mathbit}{OT1}{cmr}{bx}{it}  	% bold italic in math mode  (note that \boldsymbol{} makes EVERYTHING bold in math mode)
\theoremstyle{plain}
\newtheorem{thm}{Theorem}[section]				
\newtheorem{lem}[thm]{Lemma}						
\newtheorem{cor}[thm]{Corollary}		
\newtheorem{qu}[thm]{Question}
\theoremstyle{definition}
\newtheorem{de}[thm]{Definition}			
\newtheorem*{convention*}{Convention}
\theoremstyle{remark}
\newtheorem{rmk}[thm]{Remark}						
\newtheorem{exmp}[thm]{Example}
\newcommand{\thp}{$\theta$\=/path\xspace}
\newcommand{\discr}[2][\theta]{\widehat{#2}^{\scriptscriptstyle #1}} 
\def\thgrp{\@ifnextchar[{\@thgrpwith}{\@thgrpwithout}} %theta-fundamental group
\def\@thgrpwith[#1]{\pi_{1,#1}}
\def\@thgrpwithout{\pi_{1,\theta}}
\def\thhom{\@ifnextchar[{\@thhomwith}{\@thhomwithout}} %theta-homotopy
\def\@thhomwith[#1]{\mathrel{\sim_{#1}}}
\def\@thhomwithout{\mathrel{\sim_\theta}}
\newsavebox{\discrmapgraphic}  %creates a box containing widehat and a dot of the right size
\savebox{\discrmapgraphic}{%
\begin{tikzpicture}[scale=1, every node/.style={transform shape}]
     \path[clip] (-3.1 pt, -2.8 pt) rectangle (3.1 pt, 4 pt);
     \fill (0,0) circle (0.78 pt) (0,-0.6 pt) node[draw=black] {$\widehat{}$};
    \end{tikzpicture}}
\newcommand{\discrmap}[1][]{
    \usebox{\discrmapgraphic}
    \raisebox{0.38 ex}{$\scriptscriptstyle #1$}
    }
\title{Fundamental groups as limits of discrete fundamental groups}
\author{Federico Vigolo}
\curraddr{Mathematical Institute\\ 
University of Oxford\\
Woodstock Road\\
Oxford, OX2 6GG\\
United Kingdom}
\begin{document}

\begin{abstract}
 In this note we investigate to what extent the fundamental group of a metric space can be described as the inverse limit of its discrete fundamental groups. We show that some mild conditions suffice to imply the existence of an isomorphism and we provide a list of counterexamples to possible weakenings of these hypotheses.
\end{abstract}

% \subjclass{MSC54,MSC20F}
% \subjclass{foo}
\subjclass[2010]{20E18, 20F34, 54D05, 54E45 and 54G20} 

\maketitle

\section{Introduction}\label{sec:introduction}

The discrete fundamental group (at scale $1$) of a simplicial complex was first introduced by Barcelo, Kramer, Laubenbacher and Weaver \cite{BKLW01} as a special case of A\=/theory\textemdash a general theory of connectivity of simplicial complexes.
Barcelo, Capraro and White \cite{BCW14} later extended this definition to the more general context of metric spaces and included in the definition a parameter $\theta>0$ encoding the `scale' of discretisation.

% 
% The discrete fundamental group of parameter $\theta>0$ was defined by Barcelo\textendash Capraro\textendash White in \cite{BCW14} in an attempt to extend techniques of from the context of graphs to the more general context of metric spaces.  

The discrete fundamental group $\thgrp(X,x_0)$ consists of the group of discrete closed $\theta$\=/paths (finite sequences of points, $x_0=z_0,z_1,\ldots,z_n=x_0$, with $d(z_{i-1},z_i)\leq\theta$ for every $i=1,\ldots,n$) up to $\theta$\=/homotopy; see Section~\ref{sec:preliminaries} for the precise definition. For a (locally path connected) metric space $X$, $\thgrp(X,x_0)$ can roughly be thought of as a quotient of the fundamental group $\pi_1(X,x_0)$ where all the loops of diameter at most $\theta$ are ignored.

The study of the groups $\thgrp(X,x_0)$ already proved to be very fruitful (see for example the survey \cite{BaLa05}). When the parameter $\theta$ is large it can provide rather strong coarse invariants \cite{DeKh17,FNvL17,Vig17}. On the other hand, the behaviour of $\thgrp(X,x_0)$ as $\theta$ tends to zero has not yet been investigated thoroughly. The following has been asked in \cite[Section 7]{BCW14}:

\begin{qu}\label{quest:original}
 Can the fundamental group $\pi_1(X,x_0)$ be recovered as some sort of limit of $\thgrp(X,x_0)$ as $\theta$ tends to $0$? 
\end{qu}

In the same paper, the authors remarked that a natural choice of limit would be the inverse limit $\varprojlim\thgrp(X,x_0)$, since for every $\theta'<\theta$ there is a natural homomorphism $\thgrp[\theta'](X,x_0)\to\thgrp(X,x_0)$. Still, they also warned that $\varprojlim\thgrp(X,x_0)$ is not isomorphic to $\pi_1(X,x_0)$ in general, as can be easily seen by considering $X=\RR^2\smallsetminus \{0\}$.

\

This note was born as a spin\=/off of \cite{Vig17}. In that paper, we utilize the relation between the fundamental group and $\thgrp(X,x_0)$ for large $\theta$ to produce coarse invariants of warped systems. Since the formalism developed for that purpose could be applied for small parameters as well and proved to be useful in tackling problems such as Question~\ref{quest:original}, we thought it was worthwhile to reorganise it in this self\=/contained note where we try to answer the following modification of Question~\ref{quest:original}: 

\begin{qu}\label{quest:variant}
 For which metric spaces is the natural `discretisation homomorphism' (see Section \ref{sec:positive.results}) from the fundamental group $\pi_1(X,x_0)$ to the projective limit $\varprojlim\thgrp(X,x_0)$ an isomorphism? 
\end{qu}

The example of $\RR^2\smallsetminus\{0\}$ suggests that completeness is a reasonable requirement to ask for $X$. Moreover, the example of manifolds with cusps illustrates the fact that compactness could be essential as well. It is fairly easy to realise that local path connectedness is also required, and further analysis suggests that semi\=/locally simple connectedness should be included too (definitions are given in Section \ref{sec:preliminaries}). Our positive result is that these three conditions are actually enough:

\begin{thm}\label{thm:intro}
 If a metric space $X$ is compact, locally path connected and semi\=/locally simply connected then $\pi_1(X,x_0)\cong\varprojlim\thgrp(X,x_0)$.
\end{thm}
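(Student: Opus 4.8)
The plan is to establish the sharper fact that the discretisation homomorphism $\Phi_\theta\colon\pi_1(X,x_0)\to\thgrp(X,x_0)$ (see Section~\ref{sec:positive.results}) is already an isomorphism for all sufficiently small $\theta>0$. The theorem then follows formally: the subfamily $\{0<\theta\le\theta_0\}$ is cofinal in the inverse system, and the $\Phi_\theta$ being compatible isomorphisms identify this subfamily with the constant inverse system on $\pi_1(X,x_0)$ with identity bonding maps, whose limit is $\pi_1(X,x_0)$.

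First I would extract the relevant scales from compactness, assuming $X$ connected (its components are open and finitely many, and for small $\theta$ the group $\thgrp(X,x_0)$ only sees the component of $x_0$). Semi\=/local simple connectedness together with local path connectedness gives a cover of $X$ by path\=/connected open sets $U$ with $\pi_1(U)\to\pi_1(X)$ trivial; finitely many suffice and admit a Lebesgue number $\Lambda>0$, so that \emph{every loop of diameter $<\Lambda$ is null\=/homotopic in $X$}. Local path connectedness also yields a finite cover by path\=/connected open sets of diameter $<\Lambda/10$, with Lebesgue number $\theta_0\le\Lambda/10$; thus \emph{any two points at distance $\le\theta_0$ are joined by a path of diameter $<\Lambda/10$}. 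Fix such a path $\sigma_{pq}$ for every ordered pair $(p,q)$ with $d(p,q)\le\theta_0$, with $\sigma_{pp}$ constant and $\sigma_{qp}=\overline{\sigma_{pq}}$.

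For $\theta\le\theta_0$ I define a realisation map $\Psi_\theta\colon\thgrp(X,x_0)\to\pi_1(X,x_0)$, sending a closed $\theta$\=/path $P=(z_0,\dots,z_n)$ to the class of the concatenation $\widehat P:=\sigma_{z_0z_1}\ast\cdots\ast\sigma_{z_{n-1}z_n}$. Well\=/definedness is the technical heart. Given a grid $M$ realising a $\theta$\=/homotopy $P\thhom P'$ (Section~\ref{sec:preliminaries}), replace every lattice edge $\{v,w\}$ of $M$ by the arc $\sigma_{M(v)M(w)}$; the realised boundary of each unit square of $M$ is then a loop built from four $\sigma$\=/arcs, each of diameter $<\Lambda/10$, attached to four vertices lying pairwise within $2\theta\le\Lambda/5$, so it has diameter $<\Lambda$ and is null\=/homotopic in $X$. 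Filling in these disks and gluing them along shared edges produces a homotopy rel $x_0$ from $\widehat P$ to $\widehat{P'}$ (the two sides of $M$ being constant at $x_0$, their realisations are constant). Hence $\Psi_\theta$ is a well\=/defined homomorphism.

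It remains to check that $\Psi_\theta$ and $\Phi_\theta$ are mutually inverse. For $\Psi_\theta\circ\Phi_\theta=\id$: represent a class in $\pi_1(X,x_0)$ by a loop $\gamma$ and choose a partition $0=t_0<\dots<t_n=1$ with $d(\gamma(t_{i-1}),\gamma(t_i))\le\theta$ and, by uniform continuity, with each arc $\gamma|_{[t_{i-1},t_i]}$ of diameter $<\Lambda/10$; each loop $\gamma|_{[t_{i-1},t_i]}\ast\overline{\sigma_{\gamma(t_{i-1})\gamma(t_i)}}$ then has diameter $<\Lambda$ and is null\=/homotopic, so $\gamma\simeq\widehat P$ rel $x_0$, where $P=(\gamma(t_0),\dots,\gamma(t_n))$ represents $\Phi_\theta([\gamma])$. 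For $\Phi_\theta\circ\Psi_\theta=\id$: parametrise $\widehat P$ so that the junction points $z_0,\dots,z_n$ occur at $0,1/n,\dots,1$; since $d(z_{i-1},z_i)\le\theta$ all consecutive images are within $\theta$, so this partition computes $\Phi_\theta([\widehat P])$ and gives $\Phi_\theta([\widehat P])=[P]$. Finally the $\Phi_\theta$ ($\theta\le\theta_0$) commute with the bonding maps $\thgrp[\theta']\to\thgrp$, because a partition admissible at scale $\theta'$ is admissible at every larger scale; this completes the reduction and yields $\varprojlim\thgrp(X,x_0)\cong\pi_1(X,x_0)$ via the discretisation homomorphism. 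I expect the main obstacle to be precisely the well\=/definedness of $\Psi_\theta$: it dictates the order in which the constants must be produced ($\Lambda$, then $\theta_0$, then the arcs $\sigma_{pq}$), and it is exactly here that one must combine the precise grid description of $\theta$\=/homotopy with the statement --- genuinely using all three hypotheses via the compactness argument --- that loops of bounded diameter are null\=/homotopic in $X$.
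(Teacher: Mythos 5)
Your construction of the realisation map $\Psi_\theta$ and the verification that it is well defined and satisfies $\Psi_\theta\circ\Phi_\theta=\id$ are sound; this is essentially the paper's injectivity argument (the square loops you fill in are the loops $\eta_{i,j}$ of the paper's proof), and it does show that the discretisation maps $\pi_1(X,x_0)\to\thgrp(X,x_0)$ are injective for all small $\theta$, hence that $\discrmap$ is injective. The gap is in the composition $\Phi_\theta\circ\Psi_\theta$. To compute $\Phi_\theta([\widehat P])$ you must use a genuine $\theta$\=/discretisation of $\widehat P$, i.e.\ a partition for which each sub\=/arc lies in the closed $\theta$\=/ball about its left endpoint; the partition at the junction points $z_0,\dots,z_n$ does not qualify, because the connecting arcs $\sigma_{z_{i-1}z_i}$ are only controlled in diameter by $\Lambda/10$, not by $\theta$ (knowing $d(z_{i-1},z_i)\le\theta$ says nothing about the arc joining them). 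A true $\theta$\=/discretisation of $\widehat P$ refines each arc $\sigma_{z_{i-1}z_i}$ into a $\theta$\=/path from $z_{i-1}$ to $z_i$ of small diameter, and such a $\theta$\=/path need not be $\theta$\=/homotopic rel endpoints to the single jump $(z_{i-1},z_i)$.

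This is not a repairable technicality: the statement you are reducing to---that $\Phi_\theta$ is surjective for all sufficiently small $\theta$---fails in general. The paper remarks immediately after the proof of Theorem~\ref{thm:iso.for.u.l.p.c..s.l.s.c.} that the projections $\varprojlim\thgrp(X,x_0)\to\thgrp(X,x_0)$ need not be surjective, and exhibits a simply connected, u.l.p.c.\ and u.s.l.s.c.\ ``telescope'' containing, at a sequence of scales $\theta_n\to0$, closed $\theta_n$\=/paths of tiny diameter that are not $\theta_n$\=/null\=/homotopic; shrinking the cylinders onto a limit point produces a compact version which still satisfies all three hypotheses while keeping these essential discrete loops, so $\Phi_{\theta_n}$ is not surjective there for arbitrarily small $\theta_n$. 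This is exactly why the paper's surjectivity argument uses the entire compatible family $(Z_\theta)_{\theta>0}$ rather than a single scale: it realises the representative $Z_{\delta(\theta)}$ at the much finer scale $\delta(\theta)$, chosen so that points at distance at most $\delta(\theta)$ are joined by paths inside $\theta$\=/balls---which makes $Z_{\delta(\theta)}$ an honest $\theta$\=/discretisation of the resulting continuous loop---and only then invokes the compatibility $Z_{\delta(\theta)}\thhom Z_\theta$ to land on the given element of the inverse limit. Your map $\Psi_\theta$ could be used to finish along those lines, but the cofinal\=/subsystem reduction as you state it does not go through.
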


Perhaphs more interestingly, we also provide a series of counterexamples to possible extensions of Theorem~\ref{thm:intro}. Indeed, we show that any combination of two of the three conditions above does not imply neither injectivity nor surjectivity of the natural homomorphism $\pi_1(X,x_0)\to\varprojlim\thgrp(X,x_0)$. Extended arguments for the examples that we construct can be found in \cite{Vig18}.

\begin{rmk}
 Our result is actually more general than what we state in the introduction (see Theorem \ref{thm:iso.for.u.l.p.c..s.l.s.c.}). This suggests that there might be room for improvement, but we doubt it is possible to find meaningful `if and only if' conditions: one can always obtain `monstrous' spaces $X$ for which the isomorphism holds trivially, for example by letting $X$ be the contractible space obtained by coning\=/off a `monstrous' space $Y$.
\end{rmk}

\subsection*{Structure of the paper}
In Section~\ref{sec:preliminaries} we introduce the discrete fundamental group of a metric space and recall some notions of general topology. In Section~\ref{sec:positive.results} we prove Theorem~\ref{thm:intro}, and in Section~\ref{sec:counterexamples.surjectivity} and \ref{sec:counterexamples.injectivity} we show that we can construct counterexamples (to surjectivity and injectivity, respectively) if the hypotheses are weakened.

\subsection*{Acknowledgements} 
I wish to thank my advisor Cornelia Dru\c{t}u for her support and Gareth Wilkes for his helpful comments.

The author was funded by the EPSRC Grant 1502483 and the J.T.Hamilton Scholarship.
We also thank the Isaac Newton Institute for Mathematical Sciences, Cambridge (EPSRC grant no EP/K032208/1), for support and hospitality during the programme “Non-positive curvature, group actions and cohomology”.

\section{Notation and preliminaries}\label{sec:preliminaries}

\subsection{General topology} Throughout the paper $X$ will always be a metric space, $B(x,r)$ will denote the open ball of radius $r$ centred at $x\in X$, and $\overline B(x,r)$ will be the closed ball. A \emph{homotopy} between two paths $\gamma,\gamma'\colon[0,1]\to X$ must keep the endpoints fixed, while a \emph{free homotopy} is allowed to move them. A \emph{closed path} or \emph{loop} is a path $\gamma\colon[0,1]\to X$ with $\gamma(0)=\gamma(1)$. We say that a loop is \emph{null\=/homotopic} if it is homotopic to a constant path or, equivalently, if it is freely homotopic to a constant path (a free homotopy of loops is not allowed to break the loops into open paths).

Recall that $X$ is \emph{locally path connected} (shortened as l.p.c.) if for every point $x\in X$ and neighbourhood $U$ of $x$ there exists a neighbourhood $x\in V\subseteq U$ such that every two points in $V$ are joined by a path in $U$ (this is equivalent to the fact that every point of $X$ admits a neighbourhood basis of path\=/connected open neighbourhoods \cite[Lemma 5.14.4]{Sak13}). We say that $X$ is \emph{uniformly locally path connected} (u.l.p.c.) if for every $\epsilon>0$ there exists a $\delta>0$ such that for every $x\in X$ any two points in $B(x,\delta)$ are connected by a path $\gamma$ with image completely contained in $B(x,\epsilon)$.

The space $X$ is \emph{semi\=/locally simply connected} (s.l.s.c.) if for every $x\in X$ there exists a $\epsilon(x)>0$ small enough so that every loop with image contained in $B\bigparen{x,\epsilon(x)}$ is null\=/homotopic in $X$ (but is not necessarily null\=/homotopic in the ball). We say that $X$ is \emph{uniformly semi\=/locally simply connected} (u.s.l.s.c.) if it is semi\=/locally simply connected and one can choose $\epsilon(x)$ to be constant (\emph{i.e.} uniform over $x\in X$).

\begin{rmk}\label{rmk:cpt.implies.uniform}
 It is easy to show that if $X$ is compact then both local path connectedness and semi\=/local simple connectedness imply their uniform analogues \cite[Proposition 5.14.8]{Sak13}.
\end{rmk}

\subsection{Discrete fundamental group} Let $X$ be a metric space and fix a parameter $\theta>0$. A \emph{discrete path} at \emph{scale $\theta$} (or \emph{$\theta$\=/path}) is a $\theta$\=/Lipschitz map $Z\colon [n]\to X$ where $[n]$ is the set $\{0,1,2,\ldots,n\}$ seen as a subset of $\RR$. Equivalently, $Z$ can be seen as an ordered sequence of points $(z_0,\ldots,z_n)$ in $X$ with $d(z_i,z_{i+1})\leq\theta$; we will use both notations throughout the paper.

We say that a $\theta$\=/path $Z'\colon[m]\to X$ is a \emph{lazy version} (or \emph{lazification}) of $Z$ if it is obtained from it by repeating some values, \emph{i.e.} if $m>n$ and $Z'=Z\circ f$ where $f\colon [m]\to[n]$ is a surjective monotone map. 

Given two $\theta$\=/paths $Z_1 $ and $Z_2$ of the same length $n$, a \emph{free $\theta$\=/grid homotopy} between them is a $\theta$\=/Lipschitz map $ H\colon [n]\times[m]\to X$ such that $ H(\;\!\cdot\;\!,0)=Z_1$ and $ H(\;\!\cdot\;\! ,m)=Z_2$ (here the product $[n]\times[m]$ is equipped the $\ell^1$ metric). A \emph{$\theta$\=/grid homotopy} is a free $\theta$\=/grid homotopy so that $ H(0,t)=Z_1(0)=Z_2(0)$ and $ H(n,t)=Z_1(n)=Z_2(n)$ for every $t\in[m]$. 

\begin{de}
 Two $\theta$\=/paths $Z_1$ and $Z_2$ are \emph{$\theta$\=/homotopic} (denoted by $Z_1\thhom Z_2$) if they are equivalent under the equivalence relation induced by lazifications and $\theta$\=/grid homotopies. Equivalently, $Z_1\thhom Z_2$ if and only if there exist lazy versions $Z_1'$ and $Z_2'$ of $Z_1$ and $Z_2$ which are homotopic via a $\theta$\=/grid homotopy.
\end{de}

If the endpoint of a $\theta$\=/path coincides with the starting point of a second $\theta$\=/path, the two $\theta$\=/paths can be concatenated. Since the operation of concatenation is clearly compatible with $\theta$\=/homotopies, we can make the following definition:

 \begin{de}[\cite{BCW14}]
 Let $x_0$ be a base point in a metric space $X$. The \emph{discrete fundamental group} at \emph{scale $\theta$} (or \emph{$\theta$\=/fundamental group}) is the group $\thgrp\bigparen{X,x_0}$ consisting of $\theta$\=/homotopy classes of closed $\theta$\=/paths with endpoints $x_0$, equipped with the operation of concatenation.
\end{de}

% Just as for the usual fundamental group, the isomorphism class of the discrete fundamental group does not depend on the choice of the base point.

Note that, given $\theta'\leq \theta$, every $\theta'$\=/path is also a $\theta$\=/path and every $\theta'$\=/homotopy is also a $\theta$\=/homotopy. We therefore have a natural homomorphism $\thgrp[\theta'](X,x_0)\to\thgrp(X,x_0)$ and we can hence consider the inverse limit $\varprojlim \thgrp(X,x_0)$ of this direct system. We will show in Section~\ref{sec:positive.results} that continuous paths can be discretised\textemdash this will imply that there is a well\=/defined homomorphism $\discrmap\colon\pi_1(X,x_0)\to\varprojlim \thgrp(X,x_0)$.

\begin{rmk}
 If the image of a continuous path has diameter smaller than $\theta$ then its $\theta$\=/discretisation will be trivially $\theta$\=/homotopic to a constant $\theta$\=/path. Moreover, every closed $\theta$\=/path of length at most $4\theta$ is $\theta$\=/homotopic to a constant path as follows: $\paren{z_0,z_1,z_2,z_3,z_4=z_0}\thhom\paren{z_0,z_1,z_1,z_0,z_0} \thhom\paren{z_0,z_0,z_0,z_0,z_0}$. 
 
In some sense, the above is the only way in which loops become null\=/homotopic in $\thgrp$. For instance, it is proved in \cite{BKLW01} that the $1$\=/fundamental group of a graph is isomorphic to the fundamental group of the graph quotiented by the normal group generated by all the loops of length at most $4$ in the graph. Under the assumption of some `niceness' properties, this fact can be broadly generalised. For example, in \cite{Vig17} we show that if $X$ is a geodesic metric space then $\thgrp(X)$ is isomorphic to the quotient of $\pi_1(X)$ where all the loops of length at most $4\theta$ (up to free homotopy) are killed. In this sense we can say that the $\theta$\=/fundamental group is the quotient of the fundamental group where short cycles are ignored.
\end{rmk}

\section{The positive results}\label{sec:positive.results}

Given a continuous path $\gamma\colon [0,1]\to X$ and a sequence of times $0=t_0\leq t_1\leq\cdots\leq t_n=1$, we say that the discrete path $\bigparen{\gamma(t_0),\ldots,\gamma(t_n)}$ is a \emph{$\theta$\=/discretisation} of $\gamma$ if $\gamma(t_{i-1},t_i)\subseteq \overline{B}\bigparen{\gamma(t_{i-1}),\theta}$ for every $i=1,\ldots,n$. We will denote this \thp by $\discr{\gamma}_{(t_0,\ldots,t_n)}$. We will systematically use the following simple lemma:

\begin{lem}\label{lem:homotopic.paths_homotopic.discretisations}
 Every continuous path admits a $\theta$\=/discretisation. If two paths $\alpha$ and $\beta$ are (freely) homotopic then any two discretisations $\discr{\alpha}_{t_0,\ldots,t_n}$ and $\discr{\beta}_{t'_0,\ldots,t'_m}$ are (freely) $\theta$\=/homotopic.
\end{lem}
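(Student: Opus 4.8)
The plan is to reduce the lemma to two facts and then chain them. \emph{Fact~(I):} a fixed continuous path $\gamma$ always admits a $\theta$\=/discretisation, and any two $\theta$\=/discretisations of $\gamma$ are $\theta$\=/homotopic (hence, forgetting the endpoint conditions, also freely $\theta$\=/homotopic). \emph{Fact~(II):} if $\alpha$ and $\beta$ are (freely) homotopic then \emph{some} $\theta$\=/discretisation of $\alpha$ and \emph{some} $\theta$\=/discretisation of $\beta$ are (freely) $\theta$\=/homotopic. Granting these, the lemma is immediate: given arbitrary $\discr{\alpha}_{(t_0,\ldots,t_n)}$ and $\discr{\beta}_{(s_0,\ldots,s_m)}$, use Fact~(I) to $\theta$\=/homotope each of them to the special discretisations produced by Fact~(II), and insert Fact~(II) in between; in the endpoint\=/fixing case all three $\theta$\=/homotopies keep the endpoints fixed, while in the free case we only retain free ones.

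Existence and Fact~(II) both come from uniform continuity on a compact cube. Given $\theta$, fix $\delta>0$ so that the map under consideration displaces points by at most $\theta$ whenever its argument moves by less than $\delta$, and pick $N$ with $1/N<\delta$. Evaluating $\gamma$ on the uniform subdivision $(0,1/N,\ldots,1)$ then visibly satisfies the $\theta$\=/discretisation condition, which proves existence. For Fact~(II), let $H\colon[0,1]^2\to X$ be a (free) homotopy from $\alpha$ to $\beta$; a short check on unit steps — which displace the argument by $1/N<\delta$ — shows that the restriction of $H$ to the rescaled lattice $\{0,\ldots,N\}^2$ is $\theta$\=/Lipschitz for the $\ell^1$ metric, hence a free $\theta$\=/grid homotopy between $\discr{\alpha}_{(0,1/N,\ldots,1)}$ and $\discr{\beta}_{(0,1/N,\ldots,1)}$, and it fixes the two boundary columns exactly when $H$ fixes the endpoints.

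The substance of the proof is Fact~(I). Given two $\theta$\=/discretisations of $\gamma$ along subdivisions $(t_i)$ and $(s_j)$, I would first build a common refinement $(v_l)$ that is \emph{itself} a $\theta$\=/discretisation: the naive union $(t_i)\cup(s_j)$ need not be one, so one also throws in a uniform subdivision of mesh $<\delta$. Then $\discr{\gamma}_{(v_l)}$ is the concatenation over $i$ of the $\theta$\=/paths $S_i$ obtained by restricting it to $[t_{i-1},t_i]$, and because $\discr{\gamma}_{(t_i)}$ is a $\theta$\=/path and $\gamma(t_{i-1},t_i)\subseteq\overline{B}\bigparen{\gamma(t_{i-1}),\theta}$, \emph{every} vertex of $S_i$ lies in $\overline{B}\bigparen{\gamma(t_{i-1}),\theta}$. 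I would then isolate a \emph{coning lemma}: a $\theta$\=/path $(w_0,\ldots,w_k)$ all of whose vertices lie in $\overline{B}(w_0,\theta)$ is $\theta$\=/homotopic to the single edge $(w_0,w_k)$ — retract the interior vertices onto $w_0$ one at a time, each retraction being a legitimate two\=/row $\theta$\=/grid homotopy since $w_0$ lies within $\theta$ of every vertex, and one ends at a lazification of $(w_0,w_k)$. Applying the coning lemma to each $S_i$ and concatenating (concatenation is compatible with $\theta$\=/homotopy) yields $\discr{\gamma}_{(v_l)}\thhom\discr{\gamma}_{(t_i)}$, and symmetrically $\discr{\gamma}_{(v_l)}\thhom\discr{\gamma}_{(s_j)}$, whence $\discr{\gamma}_{(t_i)}\thhom\discr{\gamma}_{(s_j)}$.

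I expect the main obstacle to be exactly Fact~(I): the two points that a common refinement of two discretisations need not remain a discretisation (forcing one to use uniform continuity to manufacture a good common refinement), and that the bookkeeping of the coning lemma must be carried out with care — in particular checking that replacing a single interior vertex of a $\theta$\=/path by a point lying within $\theta$ of that vertex and of both its neighbours is a genuine endpoint\=/fixing $\theta$\=/grid homotopy. The remaining arguments are routine bookkeeping with uniform continuity and concatenations.
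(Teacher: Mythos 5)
Your proof is correct and follows essentially the same route as the paper: existence and the discretised grid homotopy come from uniform continuity (the paper phrases this via Lebesgue's number lemma), and independence of the choice of discretisation is reduced to a common refinement, which you collapse by coning each block onto its left endpoint where the paper instead uses a single one\-/step grid homotopy against a lazification. One small remark: the main obstacle you flag is not actually there --- the union of two $\theta$\-/discretisations of $\gamma$ is automatically a $\theta$\-/discretisation, because any two consecutive points of the union lie inside a single subdivision interval of whichever of the two original subdivisions the left point belongs to --- so your auxiliary uniform mesh is harmless but unnecessary.
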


\begin{proof}
 Fix the parameter $\theta>0$. For any continuous path $\gamma\colon[0,1]\to X$ consider the open cover of $[0,1]$ given by the preimages $\gamma^{-1}\big(B(x,\frac{\theta}{2})\big)$ with $x\in X$. By the Lebesgue's Number Lemma, there exists $N\in\NN$ large enough so that for every $t\in[0,1]$ the segment $[t-1/N,t+1/N]$ is fully contained in one of those open set. It follows that letting $t_i\coloneqq i/N$ for $i=0,\ldots,N$ yields a $\theta$\=/discretisation of $\gamma$.

 It remains to prove that continuous homotopies induce discrete homotopies. Let $H\colon[0,1]^2\to X$ be an homotopy between two paths $\alpha$ and $\beta$. 
 As above, we can use Lebesgue's Number Lemma to deduce that there exists $N\in\NN$ large enough so that for every $(t,s)\in[0,1]^2$ the image under $H$ of the ball $B\bigparen{(t,s),1/N}$ has diameter smaller than $\theta$. Consider the map $\discr{ H}\colon [N]^2\to X$ defined by
 \[
  \discr{ H}(i,j)\coloneqq H\left(\frac{i}{N},\frac{j}{N}\right).
 \]
 Then the maps $\discr{ H}(\cdot,0)\colon [N]\to X$ and $\discr{ H}(\cdot,N)\colon[N]\to X$ are $\theta$\=/discretisations of $\alpha$ and $\beta$ respectively, and $\discr{ H}$ is a free $\theta$\=/grid homotopy between them. 
 
 To conclude the proof of the lemma it is hence enough to show that any two $\theta$\=/discretisations of the same path are $\theta$\=/homotopic. This is readily done. Let $\discr{\gamma}_{t_0,\ldots,t_n}$ and $\discr{\gamma}_{t'_0,\ldots,t'_m}$ be two $\theta$\=/discretisations of a path $\gamma$. We can assume that the inequalities $t_0<\ldots<t_n$ are strict and\textemdash up to choosing a common refinement for the discretisations\textemdash we can also assume that $n\leq m$ and that for every $t_i$ there exists a $j\geq i$ so that $t_i=t'_j$. 
We now define a surjective function $f\colon [m]\to [n]$ letting $f(j)\coloneqq \max\braces{i\mid t_i\leq t'_j}$. Then the $\theta$\=/path $\bigparen{\gamma(t_{f(0)}),\ldots,\gamma(t_{f(m)})}$ is a lazification of $\bigparen{\gamma(t_0),\ldots,\gamma(t_n)}$ and it is $\theta$\=/homotopic to $\paren{\gamma(t'_0),\ldots,\gamma(t'_m)}$ via a $\theta$\=/grid homotopy consisting of a single step.

 The same proof clearly implies the statement for free homotopies as well.
\end{proof}

From Lemma~\ref{lem:homotopic.paths_homotopic.discretisations} it follows that there is a well\=/defined $\theta$\=/discretisation map $\discrmap[\theta]\colon \pi_1(X,x_0)\to\thgrp\bigparen{X,x_0}$. As it is clear that the $\theta$\=/discretisation of a concatenation of paths is $\theta$\=/homotopic to the concatenation of their $\theta$\=/discretisations, the map $\discrmap[\theta]$ is a homomorphism. To simplify the notation, we will generally drop the specific times $t_i$ from the notation for the discretisation of a path and simply write $\discr{\gamma}$.

Note that, given $\theta'<\theta$, any $\theta'$\=/discretisation $\discr[\theta']{\gamma}_{t_0,\ldots,t_n}$ of a path $\gamma$ is also a $\theta$\=/discretisation. Therefore Lemma~\ref{lem:homotopic.paths_homotopic.discretisations} also implies that the following diagram commutes:
\[
\begin{tikzcd}[column sep=-1ex]
 & \pi_1(X,x_0) \arrow[swap]{dl}{\discrmap[\theta']}  
    \arrow{dr}{\discrmap[\theta]}& \\
\thgrp[\theta'](X,x_0)\arrow{rr} & & \thgrp(X,x_0).
\end{tikzcd}
\] 
and hence the discretisation maps induce a homomorphism to the inverse limit $\discrmap\colon\pi_1(X,x_0)\to\varprojlim{}\thgrp(X,x_0)$. We will denote the image of (the homotopy class of) a continuous path $\gamma $ by $\widehat{\gamma}$.

We can now prove the main result: 

\begin{thm}\label{thm:iso.for.u.l.p.c..s.l.s.c.}
 If $X$ is u.l.p.c. and u.s.l.s.c. then the discretisation map $\discrmap\colon\pi_1(X,x_0)\to\varprojlim{}\thgrp(X,x_0)$ is an isomorphism.
\end{thm}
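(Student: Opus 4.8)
The plan is to prove that $\discrmap$ is a bijective homomorphism by constructing an explicit inverse candidate and verifying it is well defined on both sides. Since $X$ is u.l.p.c.\ and u.s.l.s.c., fix once and for all a uniform $\epsilon>0$ so that every loop of diameter at most $\epsilon$ in $X$ is null-homotopic, and a uniform $\delta>0$ (with $\delta<\epsilon/4$, say) so that any two points at distance $\leq\delta$ are joined by a path of diameter $<\epsilon/4$. I will work with scales $\theta<\delta$, which suffices since the inverse limit only depends on small $\theta$.

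\emph{Surjectivity.} Given an element of $\varprojlim\thgrp(X,x_0)$, represented for each small $\theta$ by a closed $\theta$-path $Z^\theta=(z_0,\dots,z_{n_\theta})$ compatible under the bonding maps, I would "fill in" a single $\theta$ with $\theta<\delta$: for each consecutive pair $(z_{i-1},z_i)$ choose a path $\sigma_i$ of diameter $<\epsilon/4$ joining them (using u.l.p.c.), and let $\gamma\coloneqq\sigma_1*\cdots*\sigma_{n_\theta}$, a continuous loop at $x_0$. One checks that a $\theta$-discretisation of $\gamma$ is $\theta$-homotopic to $Z^\theta$ (each $\sigma_i$ has small diameter, so its discretisation collapses to the edge $(z_{i-1},z_i)$ using the "length $\leq 4\theta$" remark, which applies since $\theta<\delta$ forces diameters $<\epsilon/4<\theta\cdot(\text{const})$ — here I must be slightly careful to pick $\delta$ so the relevant short loops genuinely have length $\le 4\theta$ after discretisation, or instead argue directly via a one-step grid homotopy). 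So $\discrmap(\gamma)$ agrees with the given element in coordinate $\theta$; because the system is coherent and $\discrmap$ commutes with the bonding maps, it must agree in all coordinates $\theta'<\theta$, and trivially in coordinates $\theta'\geq\theta$ as well. The one subtlety is to confirm that hitting a single coordinate of the inverse limit with something in the image of the coherent family $\discrmap[\theta']$ pins down the whole element; this follows since the maps $\varprojlim\thgrp\to\thgrp(X,x_0)$ for $\theta$ small enough are jointly injective — which is essentially the content of injectivity below — or, more cleanly, one shows the constructed $\gamma$ maps correctly in every small coordinate by the same diameter argument.

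\emph{Injectivity.} Suppose $\gamma$ is a continuous loop at $x_0$ with $\discrmap(\gamma)$ trivial, i.e.\ for every small $\theta$ some (hence every) $\theta$-discretisation $\discr{\gamma}$ is $\theta$-homotopic to the constant path. Fix $\theta<\delta$ and a $\theta$-homotopy $H\colon[N]\times[M]\to X$ from $\discr{\gamma}$ (a lazification thereof) to the constant $\theta$-path. The idea is to "thicken" $H$ into a genuine continuous homotopy: replace each vertex $H(i,j)$ by itself, each horizontal or vertical edge by a path of diameter $<\epsilon/4$ joining its endpoints (u.l.p.c., legitimate since consecutive values are $\theta$-close and $\theta<\delta$), obtaining a continuous map on the $1$-skeleton of the grid; then fill each unit square — its boundary is a loop built from four paths of diameter $<\epsilon/4$, hence of total diameter $<\epsilon$, hence null-homotopic by u.s.l.s.c.\ — to get a continuous $\tilde H\colon[0,1]^2\to X$. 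Its bottom edge is freely homotopic to $\gamma$ (both are small-diameter fillings of the same discretisation, and one checks the fillings of a path are homotopic rel endpoints) and its top edge is constant, while the side edges are constant loops; hence $\gamma$ is null-homotopic. To upgrade free/based homotopy bookkeeping one uses that $H$ was a based $\theta$-grid homotopy, so the side edges involve only the fixed point $x_0$ and collapse.

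\emph{Main obstacle.} The routine parts are the Lebesgue-number discretisation (already done in Lemma~\ref{lem:homotopic.paths_homotopic.discretisations}) and the concatenation compatibility. The genuinely delicate step is the square-filling argument in injectivity: one must check that the diameter of the four-edge boundary loop of a unit square is controlled by $\epsilon$ — each edge has diameter $<\epsilon/4$ and shares endpoints, so the union has diameter $<\epsilon$, which is exactly why a \emph{uniform} choice of $\epsilon$ (not merely pointwise $\epsilon(x)$) is needed — and then to verify that the resulting continuous homotopy really has $\gamma$ (up to based homotopy) on one side and a constant on the other, carefully tracking that lazifications and the laziness in $t$ of $H$ do not introduce non-contractible artefacts. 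A secondary point requiring care is choosing the quantifier order $\epsilon\rightsquigarrow\delta\rightsquigarrow\theta$ so that all the "small-diameter path" invocations are simultaneously valid; this is where u.l.p.c.\ (uniform $\delta$ for a given $\epsilon$) and u.s.l.s.c.\ (uniform $\epsilon$) are each used essentially, matching the counterexamples promised in the introduction.
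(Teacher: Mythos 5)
Your injectivity argument is essentially the paper's: the paper phrases the filling of the $\theta$\=/grid homotopy as a decomposition of $\gamma$ into a product of conjugates of small square loops $\eta_{i,j}$ (each null\=/homotopic by u.s.l.s.c.) rather than as a literal continuous map on $[0,1]^2$, but the idea and the quantifier order $\epsilon\rightsquigarrow\delta\rightsquigarrow\theta$ are the same, and your diameter bookkeeping for the square boundaries is correct.

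The gap is in surjectivity, in two places. First, you fill the single path $Z^\theta$ using paths of diameter $<\epsilon/4$, where $\epsilon$ is the fixed u.s.l.s.c.\ constant; since $\epsilon/4$ does not shrink with $\theta$, the resulting loop $\gamma$ need not admit $Z^\theta$ as a $\theta$\=/discretisation, and the ``length $\leq 4\theta$'' collapse you invoke is unavailable. The paper avoids this by applying u.l.p.c.\ at scale $\theta$ itself: it chooses $\delta(\theta)\leq\theta$ so that $\delta(\theta)$\=/close points are joined by paths inside $\theta$\=/balls, and fills the \emph{finer} member $Z_{\delta(\theta)}$ of the coherent family, which is then genuinely a $\theta$\=/discretisation of the filling $\gamma_\theta$. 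Second, and more seriously, your claim that agreement in the single coordinate $\theta$ forces agreement in all coordinates $\theta'<\theta$ is false as stated: the bonding maps go from finer to coarser scales, so coherence gives no information downward. Your proposed patch\textemdash that the projections $\varprojlim\thgrp(X,x_0)\to\thgrp(X,x_0)$ are injective for small $\theta$, which you say is ``essentially the content of injectivity''\textemdash is circular: injectivity of those projections is a \emph{consequence} of the full theorem and itself requires producing continuous fillings at every scale, not just one (the bonding maps $\thgrp[\theta'](X,x_0)\to\thgrp(X,x_0)$ are not obviously injective, so nothing shorter will do). The paper's resolution is to construct $\gamma_\theta$ for \emph{every} $\theta$, note that $\discr[\bar\theta]{\gamma_\theta}$ and $\discr[\bar\theta]{\gamma_{\bar\theta}}$ are $\bar\theta$\=/homotopic for a fixed small $\bar\theta$ at which $\discrmap[\bar\theta]$ is injective, conclude that all the $\gamma_\theta$ are homotopic to the single loop $\gamma_{\bar\theta}$, and then verify $\discr{\gamma_{\bar\theta}}\thhom Z_\theta$ coordinate by coordinate. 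You need this all\=/scales construction; hitting one coordinate cannot pin down the inverse\=/limit element.
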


\begin{proof}
 \emph{Injectivity}: since $X$ is u.s.l.s.c., there exists $\epsilon>0$ so that every loop contained in a ball of radius at most $\epsilon$ is null\=/homotopic in $X$. Moreover, since $X$ is u.l.p.c. there exists a $0<\delta\leq \epsilon/2$ so that any two points in $B(x,\delta)$ are joined by a path in $B(x,\epsilon/2)$. Fix a parameter $0<\theta<\delta$ and let $\gamma$ be a continuous closed path whose $\theta$\=/discretisation $\discr{\gamma}$ is trivial in $\thgrp(X,x_0)$; we claim that $\gamma$ is null\=/homotopic in $X$. 
 
 We can assume that $\discr{\gamma}\colon [n]\to X$ is homotopic to the constant path via a $\theta$\=/grid homotopy $H\colon [n]\times [m]\to X$. By construction, for every $0\leq i\leq n-1$ and $0\leq j\leq m$ there exist continuous paths $\alpha_{i,j}$ joining the point $H(i,j)$ with $H(i+1,j)$ and having its image completely contained in the ball of radius $\epsilon/2$ centred at $H(i,j)$. Similarly, for every $0\leq i\leq n$ and $0\leq j\leq m-1$ there exist analogous paths $\beta_{i,j}$ going from $H(i,j)$ to $H(i,j+1)$.
 We can concatenate these paths to obtain a path $\xi_{i,j}$ joining $x_0$ to $H(i,j)$ letting $\xi_{i,j}\coloneqq\bigbrack{\paren{\beta_{0,0}}\cdots\paren{\beta_{0,j-1}} }\bigbrack{\paren{\alpha_{0,j}}\cdots\paren{\alpha_{i-1,j} } }$.
 We now define some closed loops as follows (see Figure~\ref{fig:grid.homotopy}): 
 \[
   \eta_{i,j}\coloneqq\paren{\xi_{i,j}}
   \paren{\alpha_{i,j}}\paren{\beta_{i+1,j}}\paren{\alpha_{i,j+1}^{-1}}\paren{\beta_{i,j}^{-1}} \paren{\xi_{i,j}^{-1} }.
 \]
 
 \begin{figure}
  \centering
  \includegraphics{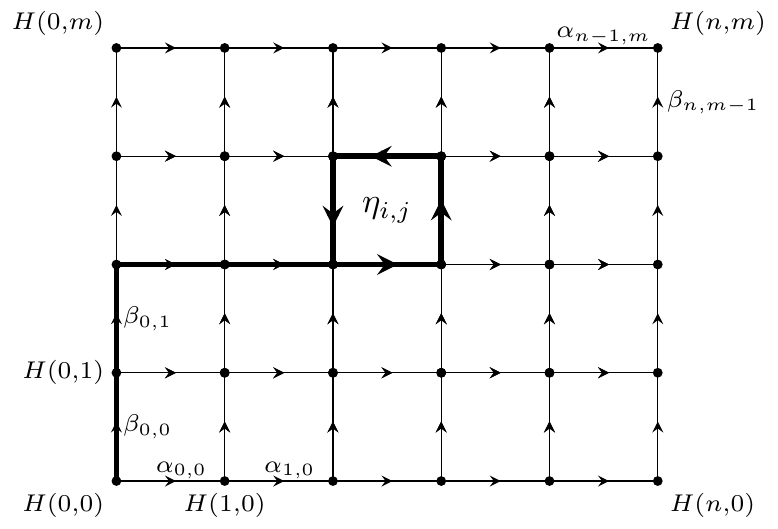}
  \caption{Constructing a continuous homotopy}
  \label{fig:grid.homotopy}
 \end{figure}
 
 By construction, the loops $\eta_{i,j}$ are null\=/homotopic and it is easy to see that the path $\gamma$ is homotopic to the product of the $\eta_{i,j}$ (concatenating them in the appropriate order), and therefore $[\gamma]$ is the trivial element in $\pi_1(X,x_0)$. It follows that the map $\discrmap[\theta]$ is injective and, a fortiori, $\discrmap$ is injective as well. 
 
 \emph{Surjectivity:} a generic element of $\varprojlim\thgrp(X,x_0)$ can be represented by a family $(Z_\theta)_{\theta>0}$\textemdash where $Z_\theta$ is a closed $\theta$\=/path based at $x_0$\textemdash so that for every $\theta'<\theta$ we have $Z_{\theta'}\thhom Z_\theta$. 
 Again, since $X$ is u.l.p.c., for every $\theta>0$ there exists a $0<\delta(\theta)\leq\theta$ such that any two points in $B(x,\delta(\theta))$ are joined by a path in $B(x,\theta)$ (in what follows we assume $\delta(\theta)$ to be a decreasing function of $\theta$). It follows that the points of the $\delta(\theta)$\=/path $Z_{\delta(\theta)}$ can be joined with some small continuous paths and, concatenating these paths, we obtain a continuous loop $\gamma_\theta$ and we see that $Z_{\delta(\theta)}$ is a $\theta$\=/discretisation of $\gamma_\theta$. 
 
 From the proof of injectivity it follows that there exists a $\bar\theta>0$ small enough so that $\discrmap[\bar\theta]$ is injective. Note that for every $0<\theta<\bar\theta$ we have 
 \[
  \discr{\gamma_\theta}\thhom Z_{\delta(\theta)}
    \thhom[\delta(\bar\theta)]Z_{\delta(\bar\theta)}
    \thhom[\bar\theta]\discr[\bar\theta]{\gamma_{\bar\theta}}.                                                                                                                                                                              
 \] 
 It follows that $\discr{\gamma_\theta}$ and $\discr[\bar\theta]{\gamma_{\bar\theta}} $ are $\bar\theta$\=/homotopic and hence $\gamma_\theta$ and $\gamma_{\bar\theta}$ are homotopic. Therefore 
 \[
  \discr{\gamma_{\bar\theta}}\thhom\discr{\gamma_{\theta}}\thhom Z_{\delta(\theta)}\thhom Z_\theta                                                                                                                                                                                                                                                                                                                                                                                           
 \] 
 for every $0<\theta<\bar\theta$, and hence $\widehat{\gamma_{\bar\theta}}=([Z_\theta])_{\theta>0}\in\varprojlim\thgrp(X,x_0)$. 
\end{proof}

By Remark~\ref{rmk:cpt.implies.uniform} we obtain the following:

\begin{cor}[Theorem~\ref{thm:intro}] 
 If the space $X$ is l.p.c., s.l.s.c. and compact then $\discrmap\colon\pi_1(X,x_0)\to\varprojlim\thgrp(X,x_0)$ is an isomorphism. 
\end{cor}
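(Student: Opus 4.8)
The plan is to obtain this as an immediate consequence of Theorem~\ref{thm:iso.for.u.l.p.c..s.l.s.c.}. That theorem already does all the substantive work, establishing that $\discrmap\colon\pi_1(X,x_0)\to\varprojlim\thgrp(X,x_0)$ is an isomorphism whenever $X$ is u.l.p.c.\ and u.s.l.s.c. So the only thing left to check is that a \emph{compact} metric space which is l.p.c.\ and s.l.s.c.\ automatically satisfies these two uniform strengthenings; this is precisely Remark~\ref{rmk:cpt.implies.uniform}, and I would either cite it directly or reprove it in a couple of lines via the standard finite-subcover argument.

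Concretely, for the uniform local path connectedness I would fix $\epsilon>0$, and for each $x\in X$ use l.p.c.\ (applied to the neighbourhood $B(x,\epsilon/2)$) to get a radius $\delta_x\le\epsilon/2$ such that any two points of $B(x,\delta_x)$ are joined by a path inside $B(x,\epsilon/2)$. The balls $B(x,\delta_x/2)$ cover the compact space $X$, so finitely many of them, centred at $x_1,\dots,x_N$, already do; I would then set $\delta\coloneqq\min_k \delta_{x_k}/2$. For an arbitrary $x\in X$, picking $k$ with $x\in B(x_k,\delta_{x_k}/2)$, any $p,q\in B(x,\delta)$ lie in $B(x_k,\delta_{x_k})$ and hence are joined by a path inside $B(x_k,\epsilon/2)\subseteq B(x,\epsilon)$, the last inclusion holding because $d(x,x_k)<\delta_{x_k}/2\le\epsilon/4$. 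The argument for uniform semi\=/local simple connectedness is entirely analogous: cover $X$ by the balls $B\bigl(x,\epsilon(x)/2\bigr)$ where $\epsilon(x)$ witnesses s.l.s.c.\ at $x$, extract a finite subcover, and take the uniform constant to be half the smallest radius that appears, so that every small ball around an arbitrary point is contained in one of the finitely many ``good'' balls.

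With the uniform hypotheses verified, Theorem~\ref{thm:iso.for.u.l.p.c..s.l.s.c.} applies verbatim and yields the claimed isomorphism. I do not expect any genuine obstacle here: the entire content is already in the proof of Theorem~\ref{thm:iso.for.u.l.p.c..s.l.s.c.}, and the only step requiring any care at all is the passage from the pointwise to the uniform formulations of l.p.c.\ and s.l.s.c., which is the classical compactness argument sketched above.
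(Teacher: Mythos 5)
Your proposal is correct and is exactly the route the paper takes: the corollary is deduced from Theorem~\ref{thm:iso.for.u.l.p.c..s.l.s.c.} by invoking Remark~\ref{rmk:cpt.implies.uniform} (compactness upgrades l.p.c.\ and s.l.s.c.\ to their uniform versions), which the paper cites from Sakai rather than reproving. Your sketched finite-subcover arguments for that upgrade are also sound, so there is nothing to fix.
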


Note that from the proof of Theorem~\ref{thm:iso.for.u.l.p.c..s.l.s.c.} it follows that if $X$ is u.l.p.c. and u.s.l.s.c., then the projections $\varprojlim\thgrp(X,x_0)\to \thgrp(X,x_0)$ are injective for every $\theta$ small enough. However, such projections need not be surjective as shown by the following example:

\begin{exmp}
 Consider in $\RR^3$ the cylindrical shell whose bases are disks of radius $1$ centred at $(0,0,0)$ and $(1,0,0)$. Let $X_0\subset\RR^3$ be the space obtained by adding to the cylinder the segments joining $\bigparen{\frac{1}{2}, e^{\frac{k\pi}{4}i}}$ with $\bigparen{\frac{1}{2}, \frac{1}{2}e^{\frac{k\pi}{4}i} }$ for $k=1,\ldots,8$.
 Let $X_1\subset\RR^3$ be a copy of $X_0$, but translated by $1$ on the $x$ coordinate and with the $y$ and $z$ coordinates rescaled by a half. Similarly, $X_2$ is a translated and rescaled copy of $X_1$ and so on. Finally, let $X\coloneqq \cup_{n\in\NN} X_n$ be the `telescope space' obtained taking the union all such cylinders (with the ambient metric of $\RR^3$). See Figure \ref{fig:non.surjective.projections}.
 
 Now, $X$ is u.l.p.c. and u.s.l.s.c. (in fact, it is simply connected). We can hence use Theorem \ref{thm:iso.for.u.l.p.c..s.l.s.c.} to deduce that $\varprojlim\thgrp(X)=\{0\}$. Still, for every $n\in\NN$ the discrete path in $X_n\subset X$ given by the points 
 $\bigparen{n+\frac{1}{2}, \frac{1}{2^{n+1}}e^{\frac{k\pi}{4}i} }$ represents a non\=/trivial \thp in $\thgrp(X)$ for $\theta=\norm*{\frac{1}{2^{n+1}}e^{0}-\frac{1}{2^{n+1}}e^{\frac{\pi}{4}i}}$.
\end{exmp}

\begin{figure}
 \centering
 \includegraphics{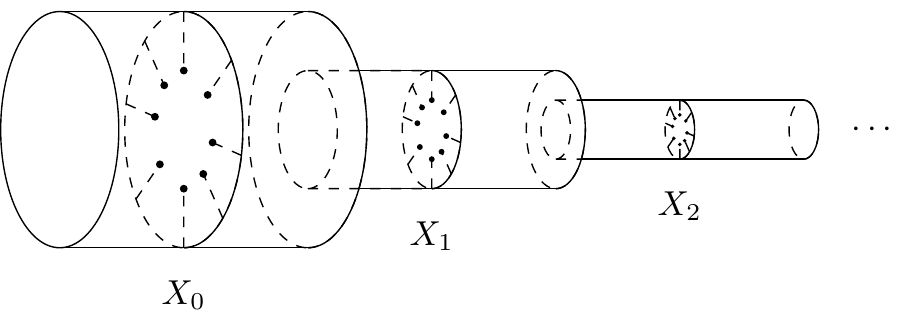}
 \caption{Example of space with non\=/surjective projections}
 \label{fig:non.surjective.projections}
\end{figure}

One may think that it should be true in general that if $\discrmap\colon\pi_1(X,x_0)\to\varprojlim\thgrp(X,x_0)$ is an isomorphism then the maps $\discrmap$ should be injective for $\theta$ small enough. The next example shows that this is not the case.

\begin{exmp}
 Let $\frac{1}{2^n}\cdot\SS^1$ be the circle of radius $1/2^n$ in $\RR^2$ equipped with the metric $d_n$ induced from $\RR^2$ and let $X\coloneqq \prod_{n\in\NN} \frac{1}{2^n}\cdot\SS^1$ equipped with the $\ell_1$ metric, \emph{i.e.} $d\bigparen{(x_n)_{n\in\NN},(y_n)_{n\in\NN}}=\sum_{n\in\NN}d_n(x_n,y_n)$. 
 One can then check that he fundamental group of $X$ is isomorphic to the direct product $\prod_{n\in\NN}\ZZ$ and the discretisation $\discrmap$ is an isomorphism. Still, it easy to see that $\thgrp(X,x_0)$ is isomorphic to a product of only finitely many copies of $\ZZ$ because the small loops do not contribute.
\end{exmp}

\section{The counterexamples: surjectivity}\label{sec:counterexamples.surjectivity} In this section we show how the surjectivity of $\discrmap$ may fail if one hypothesis among local path connectedness, semi\=/local simple connectedness and compactness is dropped.

\begin{figure}
  \centering
  \begin{subfigure}[b]{0.8 \textwidth}
    \centering
    \includegraphics{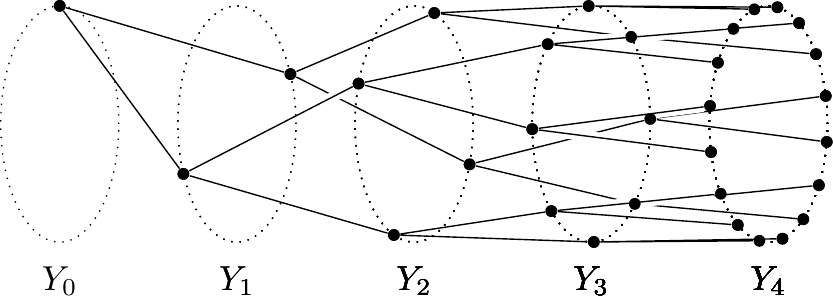}
    \caption{Non compact example}
    \label{subfig:non-cpt.example}
 \end{subfigure}
 
 \vspace{1 em}
 \begin{subfigure}[b]{0.8 \textwidth}
    \centering
    \includegraphics{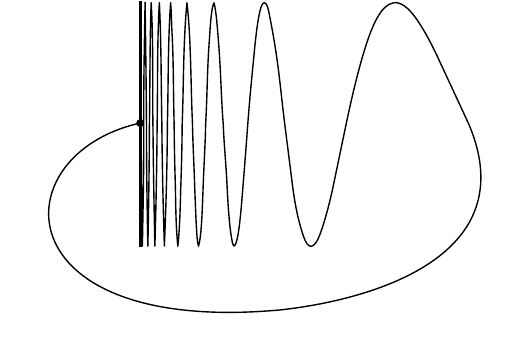}
    \caption{Non locally path connected example}
    \label{subfig:non-l.p.c.-surjectivity}
 \end{subfigure}

 \vspace{1 em}
 \begin{subfigure}[b]{0.45 \textwidth}
    \centering
%     \begin{tikzpicture}[scale=1.6]  			%%%  Hawaiian earring
%     \draw(0,0)node[bnode, inner sep=0.5]{};
%       \foreach \i in {1,...,50}{
% 	\draw (1/\i,0) circle (1/\i);
% 	}
%     \draw (2.2,0) node{$a_1$}
% 	  (1.17,0) node{$a_2$}
% 	  (0.79,0) node{$a_3$};      
%     %   \foreach \i in {1,...,3}
%     %     \draw (\i/15,0)node[bnode, inner sep=0]{};
%     \end{tikzpicture}
    \includegraphics{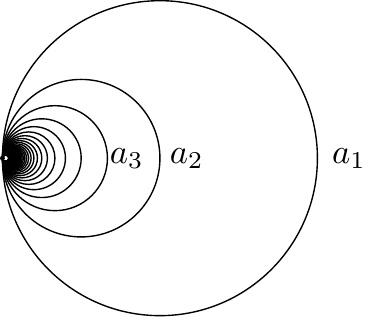}
    \caption{Hawaiian earring}
    \label{subfig:hawaiian.earring}
 \end{subfigure}
 ~
 \begin{subfigure}[b]{0.45 \textwidth}
  \centering
  \includegraphics{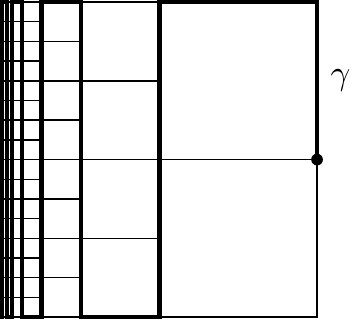}
  \caption{Hawaiian window}
  \label{subfig:hawaiian.window}
 \end{subfigure}
 \caption{Counterexamples to surjectivity}
\end{figure}

\begin{exmp}[not compact]
 In $\RR^3$, for every $n\in\NN$ let $S_n$ be the unit circle centred at $(0,0,n)$ lying on the plane orthogonal to the axis $(0,0,1)$, and let $Y_n\subset S_n$ be the subset of points at angle $2\pi\frac{k+1/2}{2^n}$ for some $k\in\NN$. 
Consider now the space $X\subset\RR^3$ obtained as the union of the segments joining a point in $Y_n$ with the two closest points in $Y_{n+1}$ (see Figure~\ref{subfig:non-cpt.example}). The space $X$ is contractible, as it is homeomorphic to a tree, still it is easy to see that $\varprojlim\thgrp(X)$ contains $\ZZ$ as a subgroup.
\end{exmp}

\begin{exmp}[not l.p.c.]
 In $\RR^2$, let $X$ be the union of the graph of the function $\sin(1/x)$ for $x\in(0,1]$ with the segment $I$ joining $(0,-1)$ to $(0,1)$ and a path joining one end of the graph to $I$ (see Figure~\ref{subfig:non-l.p.c.-surjectivity}). This space is compact and simply connected, but $\varprojlim\thgrp(X)=\ZZ$.
\end{exmp}

\begin{exmp}[not s.l.s.c.]
 Let $X$ be the Hawaiian earring, \emph{i.e.} the union of the circles of radius $1/n$ centred at $(1/n,0)$ in $\RR^2$ (Figure~\ref{subfig:hawaiian.earring}). This space is the most common example of a non locally simply connected space and\textemdash despite still being a rather mysterious object\textemdash its fundamental group has been quite thoroughly studied. 
It is known that $\pi_1(X)$ injects in the projective limit of the fundamental group of the largest cycles $\pi_1(X)\hookrightarrow \varprojlim_n\bigparen{\ZZ^{\ast n }}$ but its image (which can be completely described) is not the whole group. Specifically, let $F_n=\ZZ^{\ast n}$ be freely generated by the set $\{a_1,\ldots,a_n\}$, labelled so that the projection $F_{n+1}\to F_n$ is the map sending $a_{n+1}$ to the identity. 
Let $w_n$ be a reduced word in $F_n$ and let $(w_n)_{n\in\NN}$ represent an element of $\varprojlim_n\bigparen{F_n}$. Then $(w_n)_{n\in\NN}$ is in the image of $\pi_1(X)$ if and only if for every $i\in\NN$ the number of times that the letter $a_i$ appears in $w_n$ is bounded uniformly on $n$ (and hence eventually constant)\cite{MoMo86,Eda92}.

 Now one can show that for every $n\in\NN$ there is an appropriate value of $\theta$ so that $\thgrp(X)\cong F_n$, and that the discretisation $\discrmap$ coincides with the injection $\pi_1(X)\hookrightarrow \varprojlim_n\bigparen{\ZZ^{\ast n }}$ mentioned above. In particular, the discretisation homomorphism is not surjective.
\end{exmp}

Judging from the above example, one might hope to be able to characterise the image of $\pi_1(X)$ into $\varprojlim\thgrp(X)$ as the `sequences of words (in some generating set of $\thgrp(X)$) whose projections in $\thgrp(X)$ are eventually constant for every fixed $\theta$'. The next example shows that this is unlikely.
 
\begin{exmp}[not s.l.s.c. \emph{bis}] Let $X_0\subset \RR^2$ be the (empty) square with corners $(1,1),(-1,1),(-1,-1),(1,-1)$ and let $X_1$ be obtained from the square $X_0$ by adding the central cross (\emph{i.e.} the two segments joining $(-1,0)$ to $(1,0)$ and $(0,-1)$ to $(0,1)$). Now, let $X_{n+1}$ be obtained from $X_n$ by adding the central crosses to all the left\=/most squares of $X_n$ and let $X=\bigcup_n X_n$ be a ``Hawaiian window''.

Consider now the infinite path $\gamma\colon [0,\infty)\to X$ starting from the far right and zig\=/zagging from the top to the bottom while moving to the left\textemdash as the graph of $\sin(1/x)$ would do\textemdash (see Figure~\ref{subfig:hawaiian.window}). For every $\theta$, the discretisation $\discr{\gamma}$ is $\theta$\=/homotopic to a finite \thp, and hence (by closing up the loop) $\gamma$ can be used to define an element in $\varprojlim\thgrp(X)$. Still, $\gamma$ is not homotopic to any continuous path $\gamma'\colon [0,1]\to X$ and hence $\discr{\gamma}$ cannot be in the image of $\pi_1(X)$, despite having `constant projections onto $\thgrp(X)$'.
\end{exmp}

\section{The counterexamples: injectivity}\label{sec:counterexamples.injectivity} As in Section \ref{sec:counterexamples.surjectivity}, we show how injectivity of $\discrmap$ may fail if one hypothesis is dropped.

\begin{figure}
 \centering
 \begin{subfigure}{1\textwidth}
  \centering
  \includegraphics{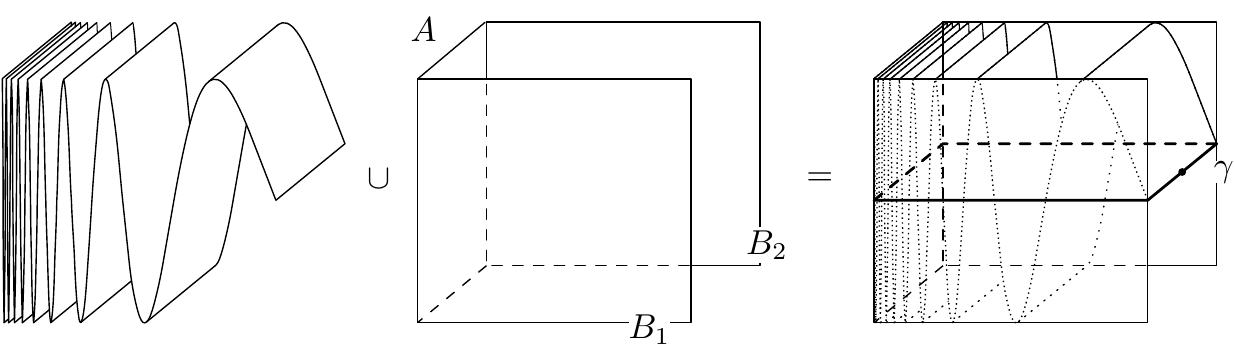}
  \caption{Non locally path connected example}
  \label{subfig:non-l.p.c..injectivity}
 \end{subfigure}
 
 \vspace{1 em}
 \begin{subfigure}{1\textwidth}
  \centering
  \includegraphics{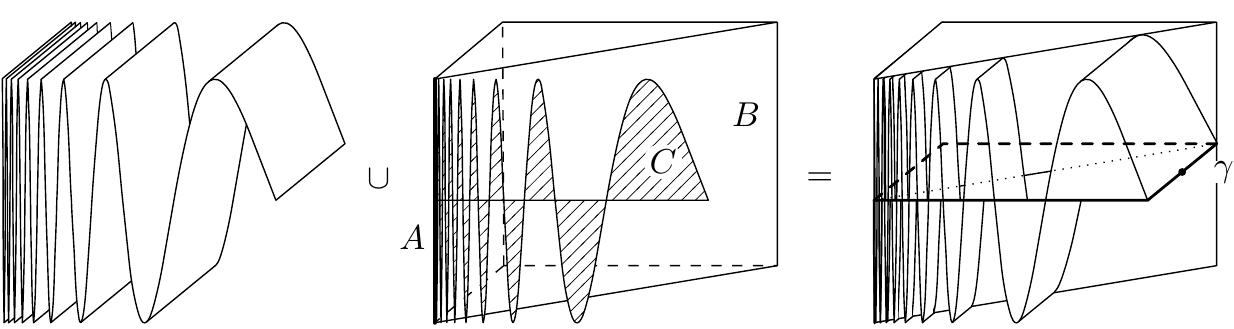}
  \caption{Non semi\=/locally simply path connected}
  \label{subfig:non-s.l.s.c.}
 \end{subfigure}
 \caption{Counterexamples to injectivity}
\end{figure}

\begin{exmp}[not compact]
 It is sufficient to consider $\RR^2\smallsetminus\{0\}$, or a manifold with cusps if one still desires completeness.
\end{exmp}

\begin{exmp}[not l.p.c.]\label{exmp:non.l.p.c..inj}
 In $\RR^3$, consider the graph of $z=\sin(\pi/x)$ as $(x,y)$ range in $[0,1]\times[-1,1]$. Let $X\subset\RR^3$ be this graph together with the limit square $A$ and two squares $B_1,B_2$ running sideways the graph as in Figure~\ref{subfig:non-l.p.c..injectivity}. The path $\gamma$ cutting through these three squares and closing up straight through the graph of $\sin(1/x)$ is not homotopic to a constant path.
 
 Indeed, if there was a continuous map $F\colon\DD\to X$ with $F|_{\partial \DD}=\gamma$ (here $\DD$ is the disk) then there would be a path $\alpha$ in $\DD\smallsetminus F^{-1}(B_1\sqcup B_2)$ joining a point of $F^{-1}(A)$ to a point $X\smallsetminus F^{-1}(A\cup B_1\cup B_2)$. But this is impossible, as $F\circ \alpha$ would be a path reaching the limit square $A$ by running along the graph of $\sin(1/x)$.
\end{exmp}

\begin{exmp}[not s.l.s.c.] 
 The basic idea is similar to that of Example \ref{exmp:non.l.p.c..inj}, but it is a little more technical. Instead of adding to the graph of $\sin(\pi /x)$ three squares, the space $X$ is obtained adding a whole triangular prism built over the triangle with vertices $(0,-1),(0,1),(1,1)$. Moreover, we also add on the vertical plane passing through $(0,-1),(1,-1)$ the subset of $\RR^2$ obtained by filling in the space contained between the graph of $\sin(\pi/x)$ and the $x$\=/axis as $x$ ranges in $(0,1]$ (see Figure~\ref{subfig:non-s.l.s.c.}). Let $B$ denote the prisms, $C$ the subset of the plane before mentioned and $A$ the segment joining $(0,-1,-1)$ and $(0,-1,1)$. We claim that the same path $\gamma$ as in Example~\ref{exmp:non.l.p.c..inj} is not homotopically trivial.

\begin{figure}
  \small 
  \begin{tikzpicture}[y=0.15pt, x=0.17pt, yscale=-1.000000, xscale=1.000000, inner sep=0pt, outer sep=0pt,scale=2 ]
      \path[draw=black,fill=black!15,line join=round,line cap=butt] 						%front sin(1/x)
	(74.1718,106.8201) .. controls (75.0318,186.8494) and (75.7469,266.2137) ..
	(75.8733,342.8778) .. controls (75.4532,421.4588) and (76.6260,498.5121) ..
	(77.5748,577.2386) .. controls (78.7229,498.7784) and (80.1425,422.1172) ..
	(79.7509,343.0782) .. controls (79.4832,266.0045) and (80.6921,186.6327) ..
	(81.9270,107.2208) .. controls (82.8182,185.7167) and (83.7270,264.9312) ..
	(84.1387,342.9293) .. controls (84.1387,422.2113) and (85.0541,500.0238) ..
	(86.3504,577.2373) .. controls (87.6330,500.0029) and (88.6487,422.2237) ..
	(88.9019,342.8883) .. controls (89.8247,264.5828) and (89.4426,184.0189) ..
	(91.6704,107.9718) .. controls (95.4484,187.2044) and (93.4284,264.2180) ..
	(94.4974,342.6735) .. controls (95.1261,421.1738) and (96.2433,499.2500) ..
	(97.6210,577.0997) .. controls (99.5048,499.3894) and (100.8248,421.3077) ..
	(101.2297,342.6231) .. controls (102.3983,264.5046) and (101.0049,187.5411) ..
	(104.8384,108.1464) .. controls (108.0177,187.1860) and (107.5534,264.5277) ..
	(108.8557,342.6926) .. controls (109.3421,420.8747) and (110.8802,499.0567) ..
	(112.8730,577.2388) .. controls (115.5383,499.4783) and (117.4501,421.5178) ..
	(117.6989,343.1158) .. controls (119.4389,264.7896) and (118.3259,186.6708) ..
	(122.9184,108.1371) .. controls (124.2880,126.9692) and (124.2446,145.5359) ..
	(125.1259,164.8941) .. controls (127.8031,223.6982) and (127.1753,284.1589) ..
	(128.5966,342.9300) .. controls (128.7199,378.0761) and (129.3901,413.2207) ..
	(130.2267,448.3684) .. controls (131.2495,491.3352) and (130.8935,534.7730) ..
	(134.4864,577.2092) .. controls (138.1174,537.5888) and (138.4261,492.0077) ..
	(139.6095,449.0290) .. controls (140.5819,413.7177) and (141.2961,378.3053) ..
	(141.4673,342.7516) .. controls (143.9657,264.5569) and (143.0116,186.2582) ..
	(149.1064,108.1718) .. controls (155.6063,171.7350) and (155.1542,264.0121) ..
	(158.0286,342.5499) .. controls (158.6324,379.1872) and (160.2579,415.6099) ..
	(161.2986,451.9404) .. controls (164.6532,569.0567) and (165.9296,567.5975) ..
	(167.5944,577.1685) .. controls (172.7807,559.1953) and (173.9084,501.7297) ..
	(175.3941,454.4597) .. controls (176.5555,417.5063) and (178.5776,380.3293) ..
	(179.1326,342.7084) .. controls (181.6676,290.2411) and (181.6889,235.7286) ..
	(184.8127,185.1537) .. controls (186.5539,156.9626) and (187.8180,119.2358) ..
	(192.5185,108.1304) .. controls (195.7777,120.9386) and (197.8762,153.0647) ..
	(200.4310,193.0922) .. controls (203.4629,240.5947) and (204.4769,299.5862) ..
	(207.4582,342.6727) .. controls (208.5485,380.2653) and (211.8456,417.7415) ..
	(213.3434,455.3257) .. controls (214.7597,490.8619) and (216.9072,537.6971) ..
	(221.2235,564.0592) .. controls (222.1195,569.5307) and (222.0473,573.9523) ..
	(224.3984,576.7789) .. controls (225.6294,578.2590) and (228.0375,572.1870) ..
	(229.5736,564.4676) .. controls (234.3192,540.6209) and (237.3509,487.7321) ..
	(239.4925,456.4227) .. controls (242.0209,419.4599) and (244.4497,381.0653) ..
	(246.8518,341.8439) .. controls (249.6487,296.1759) and (253.4618,250.3021) ..
	(257.0906,205.2293) .. controls (260.8068,159.0687) and (266.1392,106.4140) ..
	(273.4291,107.9944) .. controls (276.9280,108.7530) and (278.9044,115.9258) ..
	(280.2570,123.8298) .. controls (296.3268,217.7325) and (298.3059,266.6998) ..
	(306.1898,341.6129) -- (318.4171,457.7969) .. controls (321.9563,491.4253) and
	(327.0141,525.0042) .. (335.5095,555.4080) .. controls (340.5234,573.3520) and
	(344.3195,577.4914) .. (348.5150,577.4960) .. controls (352.3226,577.4960) and
	(356.7490,570.9114) .. (359.5656,563.7028) .. controls (366.8784,544.9872) and
	(377.4776,499.3156) .. (383.7597,463.1604) -- (404.8270,341.9126) .. controls
	(411.8390,301.5566) and (422.3280,245.9455) .. (434.3515,199.1014) .. controls
	(447.0536,149.6138) and (461.0759,112.1125) .. (480.8930,108.2185) .. controls
	(492.0340,106.0293) and (504.9963,116.2959) .. (520.0427,143.3644) .. controls
	(534.7738,169.8656) and (545.5950,196.7917) .. (555.8606,223.3203) --
	(601.4957,341.2518) -- (74.1718,341.2518) 
	    node[pos=0.2,inner sep=0.6 pt](a1){}
	    node[pos=0.47,inner sep=0.6 pt](a2){}
	    node[pos=0.62,inner sep=0.6 pt](a3){}
	    node[pos=0.71,inner sep=0.5 pt](a4){}
	    node[pos=0.775,inner sep=0.4 pt](a5){}
	    node[pos=0.82,inner sep=0.3 pt](a6){}
	    node[pos=0.857,inner sep=0.2 pt](a7){}
	    node[pos=0.885,inner sep=0.1 pt](a8){}
	    node[pos=0.907,inner sep=0 pt](a9){}
	    node[pos=0.926,inner sep=-0.2 pt](a10){};
	
      \draw[fill= black,thick] 
	  (a1)node[yshift=-6.5 ]{$F(x_1)$} circle (0.6 pt)-- ++(0,-118) node[pos=0.5,xshift=12]{$F(\beta_1)$} circle (0.6 pt)node[yshift=5]{$z_1$}
	  (a2)node[yshift=6.5 ]{$F(x_2)$} circle (0.6 pt)-- ++(0,118) %node[pos=0.5,xshift=12]{$F(\beta_2)$}
	  circle (0.6 pt)node[yshift=-5]{$z_2$};
      \draw[fill= black] (a3) circle (0.6 pt)-- ++(0,-118) circle (0.6 pt)
	  (a4) circle (0.5 pt)-- ++(0,118) circle (0.5 pt)
	  (a5) circle (0.4 pt)-- ++(0,-118) circle (0.4 pt)
	  (a6) circle (0.3 pt)-- ++(0,118) circle (0.3 pt)
	  (a7) circle (0.2 pt)-- ++(0,-118) circle (0.2 pt)
	  (a8) circle (0.1 pt)-- ++(0,118) circle (0.1 pt)
	  (a9) circle (0.1 pt)-- ++(0,-118) circle (0.1 pt)
	  (a10) circle (0.1 pt)-- ++(0,118) circle (0.1 pt);
      
      \path[draw=black,line join=round,line cap=butt, thick] (404.8270,341.9126)node[bnode,inner sep=0.7]{} .. controls
	(411.8390,301.5566) and (422.3280,245.9455) .. (434.3515,199.1014) .. controls
	(447.0536,149.6138) and (461.0759,112.1125) .. (480.8930,108.2185) .. controls
	(492.0340,106.0293) and (504.9963,116.2959) .. (520.0427,143.3644) .. controls
	(534.7738,169.8656) and (545.5950,196.7917) .. (555.8606,223.3203) --(601.4957,341.2518)node[bnode,inner sep=0.7]{} ;
      \draw (470.8930,90.2185) node{$c_1$};
    \end{tikzpicture}
  \caption{Choosing appropriate paths and points in the subset $C\subset X$}
  \label{fig:choosing.paths.and.points}
 \caption{Counterexamples to injectivity}
\end{figure}

Assume by contradiction that $\gamma$ is null\=/homotopic. Then there is a map $F\colon \DD\to X$ that coincides with $\gamma$ on $\partial\DD$.
Let $x_n\in \partial\DD$ be the point sent to the midpoint between $\bigparen{\frac{1}{n},-1,0}$ and $\bigparen{\frac{1}{n+1},-1,0}$, and let $z_n\in C\subset X$ be the point on the vertical line through $F(x_n)$ lying at distance $1/2$ away from $F(x_n)$ (Figure~\ref{fig:choosing.paths.and.points}). Let also $c_n\subset C_n$ denote the graph of $\sin(\pi/x)$ as $x$ ranges in $\bigbrack{\frac{1}{n+1},\frac{1}{n}}$. Note that $c_n$ disconnects $X$.

It is simple to show that the image of $F$ must contain the whole of $X\smallsetminus B$. Moreover, using some extra care one can also show that we can choose a point $y_n\in F^{-1}(z_n)$ and a path $\beta_n$ connecting $x_n$ to $y_n$ such that the image $F(\beta_n)$ does not intersect curve $c_n$ (Figure~\ref{fig:choosing.paths.and.points}). In particular, any path $F(\beta_{n})$ with $n$ odd stays at distance at least $1/2$ from all the points $z_m$ with $m$ even (and vice versa). Since $\DD$ is compact, the map $F$ must be uniformly continuous, and hence there exists an $\epsilon>0$ such that the path $\beta_{n}$ with $n$ odd stays at distance at least $\epsilon$ from all the points $y_m$ with $m$ even (and vice versa).

Since $\DD$ is compact, both the sequence $(y_{2n})_{n\in\NN}$ and $(y_{2n+1})_{n\in\NN}$ will admit converging subsequences.  It follows that there exist intertwined indices $2n<2m+1<2n'<2m'+1$ such that both $d_\DD\bigparen{y_{2n},y_{2n'}}<\frac{\epsilon}{2}$ and $d_\DD\bigparen{y_{2m+1},y_{2m'+1}}<\frac{\epsilon}{2}$. We can hence join up those pairs of points with segments of length at most $\epsilon/2$. Concatenating these small paths with the paths $\beta_{2n},\beta_{2n'}^{-1}$ and $\beta_{2m+1},\beta_{2m'+1}^{-1}$ we would obtain two disjoint continuous paths in $\DD$ linking intertwined pairs of points of $\partial \DD$, which yields the desired contradiction.
\end{exmp}

% \nocite{*}

\bibliography{MainBibliography}{}
\bibliographystyle{amsalpha}
\end{document}